\numberwithin{equation}{section}
\numberwithin{equation}{section} \setlength{\oddsidemargin}{.0001in}
\newtheorem{thm}{Theorem}[section]
\newtheorem{defn}[thm]{Definition}
\newtheorem{cor}[thm]{Corollary}
\newtheorem{lemma}[thm]{Lemma}
\newtheorem{rmrk}[thm]{Remark}
\newcommand{\R}{\mathbb{R}}
\newtheorem{prop}[thm]{Proposition}
\newcommand{\abs}[1]{\left\vert{#1}\right\vert}
\newcommand{\ba}{\begin{array}}
\newcommand{\ea}{\end{array}}
\newcommand{\bthm}{\begin{thm}}
\newcommand{\ethm}{\end{thm}}
\newcommand{\bstp}{\begin{stp}}
\newcommand{\estp}{\end{stp}}
\newcommand{\blemma}{\begin{lemma}}
\newcommand{\elemma}{\end{lemma}}
\newcommand{\bprop}{\begin{prop}}
\newcommand{\eprop}{\end{prop}}
\newcommand{\bpf}{\begin{pf}}
\newcommand{\epf}{\end{pf}}
\newcommand{\bdefn}{\begin{defn}}
\newcommand{\edefn}{\end{defn}}
\newcommand{\brk}{\begin{rmrk}}
\newcommand{\erk}{\end{rmrk}}
\newcommand{\bcrl}{\begin{crl}}
\newcommand{\ecrl}{\end{crl}}
\newcommand{\norm}[1]{\left\|#1\right\|}
\newcommand{\beqn}{\begin{equation}}
\newcommand{\eeqn}{\end{equation}}
\renewcommand{\leq}{\leqslant}
\renewcommand{\geq}{\geqslant}
\newcommand{\beq}{\begin{equation}}
\newcommand{\eeq}{\end{equation}}
\newcommand{\bea}{\begin{eqnarray}}
\newcommand{\eea}{\end{eqnarray}}
\newcommand{\N}{\mathcal{N}}
\newcommand{\dl}{\delta}
\newcommand{\gm}{\gamma}
\newcommand{\delp}{p^{\delta}}
\newcommand{\delv}{v^{\delta}}
\newcommand{\delpo}{p_0^{\delta}}
\newcommand{\delpsi}{\psi^{\delta}}
\newcommand{\delw}{w^{\delta}}
\begin{document}
\renewcommand\Authfont{\small}
\renewcommand\Affilfont{\itshape\footnotesize}

\author[1]{Jacob Rubinstein\footnote{koby@technion.ac.il}}
\author[2]{Peter Sternberg\footnote{sternber@indiana.edu}}
\affil[1]{Department of Mathematics, Technion, I.I.T., 32000 Haifa, Israel}
\affil[2]{Department of Mathematics, Indiana University, Bloomington, IN 47405}
\title{Reduced equations for an active model of hydroelastic waves in the cochlea}

\maketitle

\noindent {\bf Abstract:} Building upon our earlier passive models for the cochlea \cite{rust1,rust2}, here we enhance the model with an active mechanism. Starting with a one-chamber simplification leading to a system of a time-dependent PDE in two spatial variables for the pressure coupled to a PDE in one spatial variable for the oscillation of the basilar membrane, we rigorously establish the validity of a dimension reduction to a system to two ODE's. We then present numerical simulations demonstrating the ability of this reduced active system to distinguish and amplify multi-frequency input signals.

\date



\section{Introduction}  \setcounter{equation}{0}

One of the fascinating properties of the cochlea is the existence of an active mechanism that amplifies the vibrations of the basilar membrane (BM). There are numerous effects of this amplification, including autoemission of sound, the ability to capture very weak signals, fine tuning of frequency selectivity, and compressibility, namely a nonlinear response curve including saturation \cite{ashmore10}, \cite{hudspeth14}. Therefore, it is required to supplement the ``standard" models of sound detection, including the fluid-solid wave in the cochlea and the vibration of the BM, with a nonlinear active term.

While a lot of information was collected over the years about this mechanism, including in particular a candidate protein to activate it \cite{dallos92}, \cite{zheng00}, it is still not fully deciphered. A number of theories were proposed e.g. \cite{yoon09}, \cite{yoon11},  \cite{geisler91} , \cite{geisler95}, and more. An attractive concept was introduced by Hudspeth and his colleagues. They observed \cite{hudspeth14} that the classical Hopf bifurcation implies many of the instabilities that are associated with the cochlea active mechanism. In fact, they even identified experimentally \cite{howard88} a regime of negative stiffness in the basilar membrane response that might be modeled as a Hopf bifurcation. While the Hopf picture is atractive, however, it has a few drawbacks. For example, the nonlinearity depends on the amplitude of the BM. This might affect the {\em place principle}, although it is known that this principle, where each point along the cochlea is tuned to a specific frequency, is only slightly affected by the nonlinear active mechanism. Also, while the negative stiffness reported in \cite{howard88} is a very interesting observation, it occurs over small BM amplitudes, quite smaller than the full nonlinear regime.

As an alternative, we therefore propose here a different self-oscillatory model that depends on the {\em velocity} of the BM vibration. Denoting the BM vibration by $v(x,y,t)$ where $x$ is a coordinate along the cochlea and $y$ is a coordinate orthogonal to it in the BM rest plane, a nonlinear term of the form $(1-\dot{v}^2)\dot{v}$, where $\cdot=\frac{\partial}{\partial t}$, gives rise to the well-known Rayleigh oscillator, which can be shown to yield all the basic effects of the cochlea active mechanism. However, the Rayleigh nonlinear term is unbounded in $\dot{v}$. Therefore we propose here a modified Rayleigh term such as $\rho\dot{v}e^{-c\abs{\dot{v}}}$ or $\tanh(\rho \dot{v})$ to model the BM nonlinear response.

The cochlea is a highly complex organ. Therefore most mathematical models of it are based on simplifications where the central part of it, namely the scala media, is neglected, and the attention is focused at the BM as a single partition between the scala vestibuli and the scala timpani. It is further assumed that the cochlea response can be modeled solely through the interaction of the fluid on the two sides of the BM and the BM itself. Many models go further to use the large aspect ratio of the BM to neglect the cross section variable $y$ and the channel height variable $z$ and thus write down a one-dimensional reduced model for the BM.

 In \cite{rust1} we examined the last approximation for a passive cochlea model. Using a Fourier representation of the underlying fluid and solid equations we proved the validity of the one-dimensional model under certain assumptions. It is interesting to note, as was indeed observed in \cite{rust1}, that while neglecting the $y$ coordinate is quite straightforward, neglecting the $z$ variable is not trivial since the wavelength of the wave traveling along the cochlea may become comparable to the channel height precisely at the critical location for each frequency. While the model in \cite{rust1} is based on a simple spring elastic model, we proved later \cite{rust2} a similar result for a more elaborate elastic model where the BM is taken to be a membrane. Our goal in this paper is to justify the one-dimensional approximation also for the nonlinear active model mentioned above. A major difference between the present work and our two earlier works is the nonlinearity in the model, that among other things prevents the use of the Fourier representation.

In the next section we introduce our equations. To simplify the presentation, and since as pointed out above even the more elaborate three-dimensional models are a caricature of the full cochlea dynamics, we employ a simple model where only the fluid partition above the BM is considered. Furthermore, again for simplicity, we neglect the cross-sectional variable $y$. Both these simplifications can be removed without changing our main result. In section 3 we derive a priori estimates that are later used in section 4 to pass to the limit $\dl \rightarrow 0$ where $\dl$ is the small nondimensional thickness and height. In section 5 we present a few simulations  to demonstrate the effect of the nonlinearity. Finally, we supply an appendix where we identify the fluid-solid energy term and provide formally an alternative proof of the reduced model limit.

\section{A Model for the BM with an Active Mechanism}  \setcounter{equation}{0}

We will pursue a model in which the motion of the basilar membrane is captured as a spring with mass $m$, damping constant $r$ and a variable spring constant $k(x)$, where $x$ denotes a variable measuring distance along the cochlea. This model is similar to the one analyzed in \cite{rust1} in that we assume the cochlea is filled with a linear, ideal fluid, leading to the assumption that the pressure is harmonic, cf. \eqref{Laplace} below. However,
as mentioned in the introduction, the model here differs from that given in \cite{rust1} in that we postulate here an active term so that the oscillations of the vertical deflection $v$ of the membrane are enhanced by a nonlinear forcing term $\mathcal{N}=\mathcal{N}(\dot{v})$ depending on the velocity of the deflection, cf. \eqref{spring}.
Another difference from \cite{rust1} is that here, for simplicity, we will take a one chamber model and we will ignore variations in the plane of the BM orthogonal to the $x$-direction,  in order to focus on the effect of the active mechanism without extra complications. We assume an aspect ratio of $\delta\ll1 $ between the vertical and longitudinal dimensions of the cochlea and so we scale by $\delta$ in the transverse $z$-direction (hence the appearance of $\delta^{-2}$ in \eqref{Laplace}) and assume the deflection is $O(\delta)$ . Thus, the vertical deflection $v$  appearing below is really the original deflection divided by $\delta$ (hence the appearance of $\delta^2$ in \eqref{topbotbc}) . With these scalings, our model
involves a pressure $p$ and a deflection $v$ that depend on spatial variables $x\in [0,1],\,z\in [0,1]$ and time $t$. Finally, we take the system to be driven by a specified input at the oval window (here $x=0$), that we denote by $f=f(t)$. Our system is then given by:
\begin{eqnarray}
&&
p_{xx}+\frac{1}{\delta^2}p_{zz}=0\;\mbox{for}\;0<x<1,\;0<z<1,\;t>0,\label{Laplace}\\
&&p(0,z,t)=f(t),\;p(1,z,t)=0\;\mbox{for}\;0<z<1,\;t>0,\label{sidebc}\\
&&p_z(x,1,t)=0,\;p_z(x,0,t)=-\delta^2 \ddot{v}(x,t)\;\mbox{for}\;0<x<1,\;t>0,\label{topbotbc}\\
&& m\ddot{v}+r\dot{v}+k(x)v=-p(x,0,t)+\N(\dot{v})\;\mbox{for}\;0<x<1,\;z=0,\;t>0,\label{spring}\\
&&v(x,0)=0,\;\dot{v}(x,0)=0\;\mbox{for}\;0<x<1\label{ic},
\end{eqnarray}
where $m,r$ and $k(x)$ are positive. The second condition in \eqref{topbotbc} arises from equating the acceleration of the ideal fluid to the acceleration of the basilar membrane where it makes contact along the bottom boundary. Again, we refer the reader to \cite{rust1} for more details of the derivation.

To model the active mechanism, we choose the nonlinearity $\N$ to satisfy the conditions
\begin{equation}
\mathcal{N}(0)=0,\quad \norm{\mathcal{N}}_{L^{\infty}(\R)}<\infty,\quad
 \norm{\mathcal{N'}}_{L^{\infty}(\R)}\leq \rho\;\mbox{for some constant}\;\rho<r.\label{Nprops}
\end{equation}
Possible choices include, for example,
\begin{equation}
\mathcal{N}(\dot{v})=\rho \dot{v}e^{-c\abs{\dot{v}}}\quad\mbox{or}\quad
\mathcal{N}(\dot{v})
=\tanh(\rho \dot{v}).\label{Nrho}
\end{equation}

We should stress that for now we are suppressing the dependence of $p$ and $v$ on $\delta$ for ease of notation.



\vskip 0.2cm
\noindent {\bf Existence/Regularity of the Solution to \eqref{Laplace}--\eqref{ic}}.
\vskip 0.1cm
\noindent The problem \eqref{Laplace}--\eqref{ic} is a hybrid of a PDE in the spatial variables $x$ and $z$ with an ODE in time. Through a fixed
point argument that we will not present, one can establish the existence of a local in time solution that will be smooth in the interior of the square $(x,z)\in (0,1)\times (0,1)$ and continuous up to the boundary. Then through a priori estimates similar to the ones we present below, this solution can be extended to exist for all $t>0$. We note, however, that due to the incompatibility at the corners $(x,z)=(0,0)$ and $(x,z)=(1,0)$ between the $z$-derivative of \eqref{sidebc}, namely $p_z(0,0,t)=0=p_z(1,0,t)$, and the bottom boundary condition $p_z(0,0,t)=-\delta^2\ddot{v}(0,t)$ and $p_z(1,0,t)=-\delta^2\ddot{v}(1,t)$  with $\ddot{v}$ given by \eqref{spring}, the solution will necessarily have unbounded derivatives as one approaches these corners.

\section{$\delta$-Independent Estimates on the Solution to \eqref{Laplace}--\eqref{ic}}

Our first aim is to establish bounds independent of $\delta$ on the solution pair $(p,v)$. To this end, we will first invoke the variation of constants formula for ODE's to rephrase \eqref{spring}-\eqref{ic} as an integral equation. Such a formulation of course involves a fundamental set of solutions, say $\{v_1,v_2\}$ to the homogeneous problem
\[
m\ddot{v}+r\dot{v}+k(x)v=0.
\]
One has
\begin{align*}
&v_1(x,t)=e^{-\frac{r}{2m}t}\cos\left(\frac{\sqrt{4mk(x)-r^2}}{2m}\,t\right),\;
v_2(x,t)=e^{-\frac{r}{2m}t}\sin\left(\frac{\sqrt{4mk(x)-r^2}}{2m}\,t\right)\quad
\mbox{when}\;4mk(x)>r^2,\\
&v_1(t)=e^{-\frac{r}{2m}t},\;v_2(t)=te^{-\frac{r}{2m}t}\quad\mbox{when}\;4mk(x)=r^2,\\
&v_1(x,t)=e^{\lambda_+(x)\,t},\;v_2(x,t)=e^{\lambda_-(x)\,t}\;\mbox{where}\;\lambda_{\pm}(x)=\frac{-r\pm\sqrt{r^2-4k(x)m}}{2m}
\quad \mbox{when}\;4mk(x)<r^2.
\end{align*}
Then one can write down an integral equation satisfied by $v$ that is equivalent to \eqref{spring}-\eqref{ic} using the standard variation of constants formula:
\begin{align*}
v(x,t)=&\frac{1}{m}\left\{-v_1(t)\int_0^t \frac{v_2(s)\big(-p(x,0,s)+\N(\dot{v}(x,s))\big)}{W(v_1(s),v_2(s))}\,ds\right\}\\
&+\frac{1}{m}\left\{v_2(t)\int_0^t \frac{v_1(s)\big(-p(x,0,s)+\N(\dot{v}(x,s))\big)}{W(v_1(s),v_2(s))}\,ds
\right\},
\end{align*}
where $W(v_1,v_2)$ denotes the Wronskian $v_1(v_2)_t-v_2(v_1)_t.$

Differentiating twice with respect to $t$ one finds that in all three cases above,  $\ddot{v}$ satisfies an equation of the form
\begin{equation}
\ddot{v}(x,t)=\int_0^tK(x,t-s)\left\{-p(x,0,s)+\N(\dot{v}(x,s))\right\}\,ds
+\frac{1}{m}\left(-p(x,0,t)+\N(\dot{v}(x,t))\right),\label{vtt}
\end{equation}
for a kernel $K(x,t).$
Furthermore, regardless of the sign of $4k(x)m-r^2$ one finds that
\begin{equation}
\abs{K(x,t)}\leq \gamma\;\mbox{for all}\;x\in[0,1]\;\mbox{and}\;t\geq 0,\label{absK}
\end{equation}
in light of the exponential decay of $v_1$ and $v_2$ in all cases.

 Now introducing $q(x,z,t)$ via $p(x,z,t)=q(x,z,t)+f(t)(1-x)$, we see from \eqref{Laplace}, \eqref{sidebc} and \eqref{topbotbc} that $q$ satisfies the equation
 \begin{equation}
 q_{xx}+\frac{1}{\delta^2}q_{zz}=0\;\mbox{for}\;0<x<1,\;0<z<1,\;t>0,\label{QLaplace}\\
 \end{equation}
 and the boundary conditions
\begin{eqnarray}
&&q(0,z,t)=0=q(1,z,t)=0\;\mbox{for}\;0<z<1,\;t>0\label{Qsidebc}\\
&&q_z(x,1,t)=0,\;q_z(x,0,t)=-\delta^2 \ddot{v}(x,t)\;\mbox{for}\;0<x<1,\;t>0.\label{Qtopbotbc}
\end{eqnarray}

If we multiply \eqref{QLaplace} by $-q$ and integrate by parts, a use of the boundary conditions \eqref{Qsidebc} and \eqref{Qtopbotbc}
leads to the identity
\begin{equation}
\int_0^1\int_0^1 \left({q_{x}}^2+\frac{1}{\delta^2} {q_{z}}^2\right)\,dx\,dz=\int_0^1 q(x,0,t)\,\ddot{v}(x,t)\,dx,\label{ibp}
\end{equation}
and so substituting in \eqref{vtt} we find that
\begin{eqnarray}
&&\int_0^1\int_0^1 \left({q_{x}}^2+\frac{1}{\delta^2} {q_{z}}^2\right)\,dx\,dz=\nonumber\\
&&
\int_0^1\int_0^t q(x,0,t)K(x,t-s)\left\{-q(x,0,s)-f(s)(1-x)+\N(\dot{v}(x,s))\right\}\,ds\,dx\nonumber\\
&&+\frac{1}{m}\int_0^1q(x,0,t)\left(-q(x,0,t)-f(t)(1-x)+\N(v(x,t))\right)\,dx.
\end{eqnarray}
After collecting the nonnegative terms on the left, we conclude that
\begin{eqnarray}
&&\int_0^1\int_0^1 \left({q_{x}}^2+\frac{1}{\delta^2} {q_{z}}^2\right)\,dx\,dz+\frac{1}{m}\int_0^1q(x,0,t)^2\,dx\nonumber\\
&&=-\int_0^1\int_0^t q(x,0,t)K(x,t-s)q(x,0,s)\,dx\,dx\nonumber\\
&&+\int_0^1\int_0^t q(x,0,t)K(x,t-s)\left\{-f(s)(1-x)+\N(\dot{v}(x,s))\right\}\,ds\,dx\nonumber\\
&&+\frac{1}{m}\int_0^1q(x,0,t)\left\{-f(t)(1-x)+\N(\dot{v}(x,t))\right\}\,dx.\label{id1}
\end{eqnarray}

{\bf Closing the estimates for $T<C(m,k,r)$:}\\

With an eye towards bounding the $t$-integral of the left-hand side of \eqref{id1} we then use \eqref{absK} to estimate that for any $T>0$:
\begin{eqnarray}
&&\abs{\int_0^1\int_0^T\int_0^t q(x,0,t)K(x,t-s)q(x,0,s)\,ds\,dt\,dx}\leq \gamma \int_0^1\int_0^T\int_0^t \abs{q(x,0,t)}\abs{q(x,0,s)}\,ds\,dt\,dx\nonumber\\
&&=\frac{\gamma}{2}\int_0^1\int_0^T\frac{\partial}{\partial t}\left(\int_0^t \abs{q(x,0,s}\,ds\right)^2\,dt\,dx=
\frac{\gamma}{2}\int_0^1\left(\int_0^T \abs{q(x,0,t)}\,dt\right)^2\,dx\nonumber\\
&&\leq \frac{\gamma}{2} T\int_0^1 \int_0^T q(x,0,t)^2\,dt\,dx.\nonumber\\\label{Test}
\end{eqnarray}
Given that in addition to $K$, the functions $f$ and $\N$ are uniformly  bounded on $[0,T]$, the $t$ integrals of the last two integrals on the right-hand side of \eqref{id1} are easily controlled via
Cauchy-Schwarz as
\begin{eqnarray}
&&\abs{\int_0^T\int_0^1\int_0^t q(x,0,t)K(x,t-s)\left\{-f(s)(1-x)+\N(\dot{v}(x,s))\right\}\,ds\,dx\,dt}\nonumber\\
&&+\abs{\frac{1}{m}\int_0^T\int_0^1q(x,0,t)\left\{-f(t)(1-x)+\N(\dot{v}(x,t))\right\}\,dx\,dt}\nonumber\\
&&\leq Const(\norm{f}_{L^\infty([0,T]},\norm{\mathcal{N}}_{L^\infty(\R)},m,r,k,T)\left(\int_0^T\int_0^1 q(x,0,t)^2\,dx\,dt\right)^{1/2}.\label{lasttwo}
\end{eqnarray}

Then integrating \eqref{id1} with respect to $t$ and using \eqref{Test} and \eqref{lasttwo} we find
\begin{eqnarray}
&&\int_0^T \int_0^1\int_0^1 \left({q_{x}}^2+\frac{1}{\delta^2} {q_{z}}^2\right)\,dx\,dz\,dt+\left(\frac{1}{m}-\frac{\gamma}{2}T\right)
\int_0^T\int_0^1q(x,0,t)^2\,dx\,dt \nonumber\\
&&\leq \int_0^T \int_0^1\int_0^t q(x,0,t)K(x,t-s)\N(\dot{v}(x,s))\,ds\,dx\,dt
+\frac{1}{m}\int_0^T\int_0^1q(x,0,t)\N(\dot{v}(x,t))\,dx\,dt\nonumber\\
&&\leq Const(\norm{f}_{L^\infty([0,T]},m,r,k,T)\left(\int_0^T\int_0^1 q(x,0,t)^2\,dx\,dt\right)^{1/2}.\label{Pbd}
\end{eqnarray}
It then follows that up until time $T$ where $\left(\frac{1}{m}-\frac{\gamma}{2}T\right)>0$, that is, provided that say
\begin{equation}
T\leq T^*:=\frac{1}{\gamma m},\label{smallT}
\end{equation}
one has a $\delta$-independent bound on the quantities
\[
\int_0^T \int_0^1\int_0^1 \left({q_{x}}(x,z,t)^2+\frac{1}{\delta^2} {q_{z}}(x,z,t)^2\right)\,dx\,dz\,dt\quad\mbox{and}\quad
\int_0^T\int_0^1q(x,0,t)^2\,dx\,dt,
\]
with the upper bound depending on $m,r$ and $k$. Then since
$q(0,z,t)=0$ and $q(1,z,t)=0$, we can invoke the Poincar\'e inequality
\[
\int_0^1 q(x,z,t)^2\,dx\leq \frac{1}{\pi^2}\int_0^1 {q_x(x,z,t)}^2\,dx\quad\mbox{for every}\;z\in (0,1)\;\mbox{and}\;t>0.
\]
Integrating this inequality with respect to $z$ and $t$ as well and appealing to \eqref{Pbd}, we also obtain a $\delta$-independent bound on
\[
\int_0^T \int_0^1\int_0^1 q(x,z,t)^2\,dx\,dz\,dt
\]
depending on $m,r,k$ and $\norm{f}_{L^\infty([0,T]}$, provided $T$ satisfies \eqref{smallT}. Since $p=q+f(1-x)$, we obtain similar bounds on $p$ and in particular, we conclude that for a.e. $t\in [0,T^*]$,
the function $p=p^{\delta}(\cdot,\cdot,t)$ is bounded in $H^1\big((0,1)\times(0,1)\big)$ independent of $\delta$. Then, since $T^*$ does not depend on initial data, we can simply repeat this estimate, starting with $v(\cdot,T^*),\,\dot{v}(\cdot,T^*)$ as initial data in \eqref{spring}. Since $\ddot{v}$ as characterized in \eqref{vtt} is independent of $v(\cdot,T^*)$ and $\dot{v}(\cdot,T^*)$ we obtain:
\begin{prop} \label{boundy}For every $T>0$
there exists a constant $C_0=C_0(\norm{f}_{L^\infty([0,T]},m,r,k,T)$\\ independent of $\delta$ such that
\begin{equation}
\int_0^T \int_0^1\int_0^1 \left(p^2+{p_{x}}^2+\frac{1}{\delta^2} {p_{z}}^2\right)\,dx\,dz\,dt\;+\;
\int_0^T\int_0^1p(x,0,t)^2\,dx\,dt<C_0.\label{psbd}
\end{equation}
\end{prop}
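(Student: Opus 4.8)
The plan is to leverage the short-time estimate \eqref{Pbd}--\eqref{smallT}, which already gives a $\delta$-independent bound on $\int_0^{T^*}\int_0^1\int_0^1(q_x^2+\delta^{-2}q_z^2)\,dx\,dz\,dt$ and on $\int_0^{T^*}\int_0^1 q(x,0,t)^2\,dx\,dt$ over the fixed interval $[0,T^*]$ with $T^*=1/(\gamma m)$, and to iterate it to reach an arbitrary $T>0$. First I would close the estimate \eqref{Pbd}: the right-hand side is $\mathrm{Const}\cdot\left(\int_0^{T^*}\int_0^1 q(x,0,t)^2\,dx\,dt\right)^{1/2}$, so writing $Y:=\int_0^{T^*}\int_0^1 q(x,0,t)^2\,dx\,dt$ and using $c\,Y\leq C\sqrt{Y}$ (with $c=\tfrac1m-\tfrac{\gamma}{2}T^*>0$) yields $\sqrt{Y}\leq C/c$, hence a bound on $Y$ and in turn on the Dirichlet-type integral. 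Then the Poincar\'e inequality in the $x$-variable (valid since $q$ vanishes at $x=0,1$) upgrades this to a bound on $\int_0^{T^*}\int_0^1\int_0^1 q^2\,dx\,dz\,dt$, and since $p=q+f(1-x)$ with $f\in L^\infty([0,T])$, the same bounds transfer to $p$, $p_x$ and $\delta^{-2}p_z^2$ (note $q_z=p_z$, and $p_x=q_x-f$, so the extra terms are controlled by $\|f\|_{L^\infty}$).

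The key remaining point is the iteration. The crucial structural observation, already flagged in the text, is that the kernel representation \eqref{vtt} for $\ddot v$ expresses $\ddot v(x,t)$ purely in terms of $p(x,0,\cdot)$ and $\dot v(x,\cdot)$ on $[0,t]$ via the convolution kernel $K(x,\cdot)$ satisfying the universal bound \eqref{absK}, \emph{without} any dependence on the values of $v$ or $\dot v$ at the left endpoint of the time interval. Consequently, restarting the problem at time $T^*$ with new ``initial data'' $v(\cdot,T^*),\dot v(\cdot,T^*)$ produces exactly the same integral identity \eqref{id1} on $[T^*,2T^*]$ — the homogeneous-solution contributions that would ordinarily carry the initial data have already decayed/been absorbed into the kernel $K$, whose bound $\gamma$ is data-independent. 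Therefore the same computation, with $f$ now understood on $[T^*,2T^*]$, gives a $\delta$-independent bound on the corresponding integrals over $[T^*,2T^*]$, with a constant depending only on $\|f\|_{L^\infty([0,2T^*])}$, $m$, $r$, $k$. Repeating $N=\lceil T/T^*\rceil$ times and summing the contributions over $[0,T^*],[T^*,2T^*],\dots$ covers $[0,T]$; since $N$ depends only on $T$ (and $\gamma m$, i.e. on $m,r,k$), the resulting total constant $C_0$ depends only on $\|f\|_{L^\infty([0,T])}$, $m$, $r$, $k$ and $T$, as claimed.

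I expect the main obstacle to be the careful bookkeeping of the iteration rather than any new analytic idea: one must verify that the variation-of-constants reformulation \eqref{vtt} on a shifted interval $[jT^*,(j+1)T^*]$ genuinely has the same kernel $K$ with the same bound $\gamma$, so that the smallness threshold $T^*=1/(\gamma m)$ is the \emph{same} at every stage and the induction does not degrade. This is where the remark preceding Section~3 — that $\ddot v$ as characterized in \eqref{vtt} is independent of $v(\cdot,T^*)$ and $\dot v(\cdot,T^*)$ — does the essential work. A minor secondary point is to confirm that the constant in \eqref{lasttwo}, and hence in \eqref{Pbd}, grows only polynomially (indeed at most linearly) in the interval length, so that the finitely many stages sum to a finite constant; this is immediate from tracking the Cauchy--Schwarz and $\|f\|_{L^\infty}$, $\|\mathcal N\|_{L^\infty}$ factors. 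Once these are in place, assembling the pieces and translating the $q$-bounds back to $p$-bounds to state \eqref{psbd} is routine.
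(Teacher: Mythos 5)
Your proposal is correct and follows essentially the same route as the paper: close the short-time estimate \eqref{Pbd} by absorbing the quadratic term for $T\leq T^*=1/(\gamma m)$, upgrade via Poincar\'e and the substitution $p=q+f(t)(1-x)$, and iterate over intervals of length $T^*$ using the fact that the representation \eqref{vtt} of $\ddot v$ (with its data-independent kernel bound $\gamma$) does not degrade when restarting at $t=T^*$. The only cosmetic difference is that you spell out the bookkeeping of the iteration slightly more explicitly than the paper, which disposes of it in one sentence.
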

\begin{cor}\label{vbds} For every $T>0$ there exists a constant $C_1=C_1(\norm{f}_{L^\infty([0,T]},m,r,k,T)$ independent of $\delta$ such that
\beq
 \int_0^1 \left(\dot{v}^2(x,t) +  v^2(x,t) \right) \; dx+
 \int_0^1 \int_0^T  \dot{v}^2(x,t) \, dt\,dx
 \leq C_1\;\mbox{ for all}\;t\in [0,T].\label{spaceint}
 \eeq
\end{cor}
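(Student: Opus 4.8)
The plan is to run a standard mechanical-energy estimate on the ODE \eqref{spring}, treating the pressure trace $p(x,0,t)$ as a forcing term that is already controlled by Proposition \ref{boundy}. First I would multiply \eqref{spring} by $\dot v$ and integrate in $x$ over $[0,1]$. Using $m\ddot v\dot v=\frac{d}{dt}\big(\tfrac m2\dot v^2\big)$ and $k(x)v\dot v=\frac{d}{dt}\big(\tfrac{k(x)}{2}v^2\big)$ — the $x$-dependence of $k$ causing no trouble since one differentiates only in $t$ — this yields, with $E(t):=\int_0^1\big(\tfrac m2\dot v^2+\tfrac{k(x)}{2}v^2\big)\,dx$,
\[
\frac{dE}{dt}+r\int_0^1\dot v^2\,dx=\int_0^1\big(-p(x,0,t)+\N(\dot v)\big)\dot v\,dx.
\]

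Next I would exploit the structural hypotheses \eqref{Nprops} on $\N$. Since $\N(0)=0$ and $\|\N'\|_{L^\infty(\R)}\leq\rho$, the mean value theorem gives $|\N(s)|\leq\rho|s|$, hence $\N(\dot v)\dot v\leq\rho\,\dot v^2$ pointwise; and Young's inequality gives $-p(x,0,t)\dot v\leq \tfrac{r-\rho}{2}\dot v^2+\tfrac{1}{2(r-\rho)}p(x,0,t)^2$, the splitting parameter chosen to match the available dissipation. Crucially using $\rho<r$, the $\dot v^2$ terms combine with a strictly positive leftover coefficient:
\[
\frac{dE}{dt}+\frac{r-\rho}{2}\int_0^1\dot v^2\,dx\leq \frac{1}{2(r-\rho)}\int_0^1 p(x,0,t)^2\,dx.
\]

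Finally I would integrate this differential inequality in $t$ from $0$ to any $t\leq T$, using the zero initial data \eqref{ic} so that $E(0)=0$, and bound the right-hand side by Proposition \ref{boundy}, which gives $\int_0^T\int_0^1 p(x,0,t)^2\,dx\,dt<C_0$. Discarding the nonnegative dissipation integral yields $E(t)\leq \tfrac{C_0}{2(r-\rho)}$ for all $t\in[0,T]$; since $m>0$ and $k$ is continuous and positive on $[0,1]$, hence bounded below by some $k_{\min}>0$, this is precisely a $\delta$-independent bound on $\int_0^1(\dot v^2+v^2)\,dx$. Keeping instead the dissipation integral and evaluating at $t=T$ (again discarding $E(T)\geq0$) gives $\tfrac{r-\rho}{2}\int_0^T\int_0^1\dot v^2\,dx\,dt\leq\tfrac{C_0}{2(r-\rho)}$, which is the second asserted bound. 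All constants depend only on $C_0,m,r,\rho$ and $k_{\min}$, hence on the quantities listed.

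There is no serious obstacle here: this is a routine energy argument. The only points requiring care are that $\rho<r$ is genuinely needed so that the linear damping dominates the active forcing, and that the formal multiplication by $\dot v$ and the integrations are legitimate — which follows from the interior smoothness and continuity up to the boundary of $(p,v)$ (so that $p(\cdot,0,t)$ is continuous and hence $\dot v,\ddot v$ are continuous on $[0,1]$ for each fixed $t$ via \eqref{vtt}), together with the $\delta$-uniform $L^2$ control supplied by Proposition \ref{boundy}.
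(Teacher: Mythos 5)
Your proof is correct and follows essentially the same route as the paper: multiply \eqref{spring} by $\dot v$, integrate, absorb the $\dot v^2$ contributions into the damping, and control the pressure trace via Proposition \ref{boundy}. The only (immaterial) difference is that you handle the active term through the Lipschitz bound $|\N(s)|\leq\rho|s|$ with $\rho<r$, whereas the paper uses the $L^\infty$ bound on $\N$ and absorbs with coefficient $r/2$ --- both are available from \eqref{Nprops} and yield the same conclusion.
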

\noindent
{\bf Proof of Corollary}: We multiply equation \eqref{spring} by $\dot{v}$ and integrate in $x$ and $t$ up to any time $T>0$:
\[
\int_0^1 \int_0^T \left( m \ddot{v} \dot{v} + r \dot{v}^2 + k(x)v \dot{v} \right)\; dt\,dx
=\int_0^1 \int_0^T\bigg\{ \N(\dot{v})-p(x,0,t)  \bigg\}\,\dot{v}(x,t)\;\,dt\,dx.
\]
 Using the initial condition \eqref{ic} we can then integrate two of the terms on the left with respect to time to obtain
\begin{eqnarray}
&&\frac{1}{2} \int_0^1 \left(m \dot{v}^2(x,T) + k(x) v^2(x,T) \right) \; dx + \int_0^1 \int_0^T r \dot{v}^2(x,t) \,dt\,dx=  \nonumber \\
&&\int_0^1 \int_0^T\bigg\{ \N(\dot{v})-p(x,0,t)  \bigg\}\,\dot{v}(x,t)\;\,dt\,dx .\label{e11}
\end{eqnarray}
Bounding the integrand on the right via $\{\,\cdot\,\}\dot{v}\leq \frac{1}{2r}\{\,\cdot\,\}^2+\frac{r}{2}\dot{v}^2$ so that we can absorb $\frac{r}{2}\int\int \dot{v}^2$
onto the left-hand side, we then invoke \eqref{Nrho} and \eqref{psbd} to reach \eqref{spaceint}. \qed

\section{Passing to the Limit $\delta\to 0$}

In this section, we denote explicitly the $\delta$-dependence of the solutions $\delp$ and $\delv$ to \eqref{Laplace}--\eqref{ic}.
Let us now define the integral average of $\delp$ via
\begin{equation}
\delpo(x,t):=\int_0^1\delp(x,z,t)\,dz.\label{pave}
\end{equation}

Then integrating \eqref{Laplace} with respect to $z$ and using the boundary conditions \eqref{topbotbc} we find that $\delpo$
satisfies the equation
\begin{equation}
(\delpo)_{xx}(x,t)=\delp_z(x,0,t)= -\ddot{v}^\delta(x,t),
\label{intdelp}
\end{equation}
and an integration of the boundary conditions \eqref{sidebc} yields
\begin{equation}
\delpo(0,t)=f(t),\quad \delpo(1,t)=0.
\label{delpobc}
\end{equation}
Our aim is to couple this problem for $\delpo$ with the ODE \eqref{spring} to get a closed system in $\delpo$ and $\delv$ alone and for this we will invoke the following simple lemma.
\blemma\label{closepp}
For every $T>0$ one has the bounds
\begin{equation}
\int_0^T\int_0^1\abs{\delpo(x,t)-\delp(x,0,t)}^2\,dx\,dt<C_0\,\delta^2,\label{ebd}
\end{equation}
 and
\begin{equation}
\int_0^T\int_0^1\int_0^1\abs{p^\delta(x,z,t)-\delpo(x,t)}^2\,dx\,dz\,dt<\frac{C_0}{\pi^2}\,\delta^2,\label{ebd2}
\end{equation}
where $C_0$ is the constant from Proposition \ref{boundy} that is independent of $\delta$.
\elemma
\begin{proof}
We have
\[
\abs{\delpo(x,t)-\delp(x,0,t)}=\abs{\int_0^1\int_0^z(\delp)_z(x,\eta,t)\,d\eta\,dz}\leq \int_0^1\abs{(\delp)_z(x,z,t)}\,dz
\]
from which it follows that
\begin{eqnarray*}
\int_0^1 \abs{\delpo(x,t)-\delp(x,0,t)}^2\,dx&&\leq \int_0^1\left(\int_0^1\abs{(\delp)_z(x,z,t)}\,dz\right)^2\,dx\\
&&\leq \int_0^1\int_0^1\abs{(\delp)_z(x,z,t)}^2\,dz\,dx.
\end{eqnarray*}
The estimate \eqref{ebd} then follows after an integration in $t$ once we invoke Proposition \ref{boundy}. The bound
\eqref{ebd2} follows by a similar argument once one applies the Poincar\'e inequality to the quantity
$p^\delta(x,z,t)-\delpo(x,t)$ which has mean zero when integrated with respect to $z$.

\end{proof}

Introducing the ``error term"
\begin{equation}
E^\delta(x,t):=\delpo(x,t)-\delp(x,0,t)
\quad\mbox{which satisfies}\quad\norm{E^\delta}_{L^2((0,1)\times(0,T))}=O(\delta)\label{smallE}
\end{equation}
 from \eqref{ebd}, we then reach the system \eqref{intdelp}-\eqref{delpobc} coupled to the ODE
\begin{equation}
 m\ddot{v}^\delta+r\dot{v}^\delta+k(x)\delv=-\delpo+\N(\dot{v}^\delta)+E^\delta\quad\mbox{for}\;0<x<1,\;t>0,\label{spring2}
\end{equation}
supplemented by the initial conditions \eqref{ic}.

We conclude this section with our main result on the $\delta\to 0$ limit of the pair $(\delpo,\delv).$
\begin{thm} \label{main} Denote by $p_1=p_1(x,t)$ and $v_1=v_1(x,t)$ the pair of functions solving the
system
\begin{eqnarray}
&&{p_1}_{xx} = -\ddot{v}_1\quad \mbox{for}\;0<x<1,\;t>0\label{pbar}\\
&&\mbox{subject to boundary conditions}\quad p_1(0,t)=f(t),\;p_1(1,t)=0\label{pbarbc}\\
&&\mbox{and}\nonumber\\
&&m\ddot{v}_1 + r\dot{v}_1 + k(x) v_1 = -p_1 +\N(\dot{v})\quad\mbox{for}\;0\leq x\leq 1,\;t>0\label{vbar}\\
&&\mbox{subject to initial conditions}\quad
 v_1(x,0)=\dot{v}_1(x,0)=0.\label{vbaric}
\end{eqnarray}
Then for any time $T>0$ there exists a constant $C$ independent of $\delta$ such that
\begin{equation}
\int_0^T\int_0^1\bigg\{\big(\delpo-p_1\big)^2+\big(\delv-v_1\big)^2+
\big((\delpo)_x-{p_1}_x\big)^2+\big(\dot{v}^\delta-\dot{v}_1\big)^2\bigg\}\,dx\,dt\leq C\delta^2\label{converges}
\end{equation}
and
\begin{equation}
\int_0^T\int_0^1\int_0^1 \big(p^\delta(x,z,t)-p_1(x,t)\big)^2\,dx\,dz\,dt<C\delta^2 .\label{alsoconverges}
\end{equation}
\end{thm}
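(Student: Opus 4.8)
The plan is to compare the two coupled systems by subtracting them and estimating the differences via a Gronwall-type argument, exploiting the $\delta$-independent a priori bounds from Proposition~\ref{boundy} and Corollary~\ref{vbds} together with the smallness of the error term $E^\delta$ recorded in \eqref{smallE}. Set $P:=\delpo-p_1$ and $V:=\delv-v_1$. Subtracting \eqref{pbar}--\eqref{pbarbc} from \eqref{intdelp}--\eqref{delpobc} gives $P_{xx}=-\ddot V$ on $(0,1)$ with homogeneous Dirichlet data $P(0,t)=P(1,t)=0$; subtracting \eqref{vbar}--\eqref{vbaric} from \eqref{spring2} with initial conditions \eqref{ic} gives $m\ddot V+r\dot V+k(x)V=-P+\big(\N(\dot v^\delta)-\N(\dot v_1)\big)+E^\delta$ with $V(x,0)=\dot V(x,0)=0$.

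First I would run the spatial energy identity as in Section~3: multiply $P_{xx}=-\ddot V$ by $-P$, integrate by parts in $x$ using the homogeneous boundary conditions to obtain $\int_0^1 P_x^2\,dx=\int_0^1 P\,\ddot V\,dx$, then use the variation-of-constants representation \eqref{vtt} for $\ddot V$ (which holds for $V$ too, since it is independent of initial data and of the particular forcing, with the same kernel $K$ satisfying \eqref{absK} and an extra $E^\delta$ contribution). This expresses $\int_0^1 P_x^2$ in terms of a double time-integral of $P(x,t)K(x,t-s)P(x,0,s)$-type terms, a term involving $\N(\dot v^\delta)-\N(\dot v_1)$, and a term involving $E^\delta$. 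Using $|K|\le\gamma$, the Lipschitz bound $|\N(\dot v^\delta)-\N(\dot v_1)|\le\rho|\dot V|$ from \eqref{Nprops}, Cauchy--Schwarz, and the Poincar\'e inequality $\int_0^1 P^2\le\pi^{-2}\int_0^1 P_x^2$, one obtains, after integrating in $t$ over $[0,T]$ and collecting terms as in \eqref{Pbd}, a bound of the shape
\begin{equation*}
\Big(\tfrac1m-\tfrac{\gamma}{2}T\Big)\int_0^T\!\!\int_0^1 P^2\,dx\,dt+\int_0^T\!\!\int_0^1 P_x^2\,dx\,dt\le C\int_0^T\!\!\int_0^1 \dot V^2\,dx\,dt+C\,\delta^2.
\end{equation*}
Second, I would run the ODE energy estimate as in the proof of Corollary~\ref{vbds}: multiply the equation for $V$ by $\dot V$, integrate in $x$ and $t$, use $V(x,0)=\dot V(x,0)=0$, absorb a fraction of $r\int\int\dot V^2$ onto the left against the cross terms via Young's inequality, and use $\rho<r$ to keep a positive coefficient on $\int\int\dot V^2$ after handling the $\N$ term (this is exactly where the hypothesis $\rho<r$ is used). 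This yields
\begin{equation*}
\int_0^1\big(\dot V^2(x,t)+V^2(x,t)\big)\,dx+\int_0^T\!\!\int_0^1\dot V^2\,dx\,dt\le C\int_0^T\!\!\int_0^1 P^2\,dx\,dt+C\,\delta^2
\end{equation*}
for every $t\le T$.

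The main obstacle is the same one the authors already confronted in Section~3: the coefficient $\frac1m-\frac{\gamma}{2}T$ is only positive for $T\le T^*=\frac{1}{\gamma m}$, so the two estimates above can be combined (substitute the $P$-bound into the $V$-bound and vice versa, then Gronwall in $T$ to close the coupling) only on the short interval $[0,T^*]$. To reach arbitrary $T>0$ I would iterate: on $[0,T^*]$ one gets $\int_0^{T^*}\!\!\int_0^1(P^2+P_x^2+\dot V^2)\,dx\,dt\le C\delta^2$ and, from the ODE estimate, $\int_0^1(\dot V^2+V^2)(x,T^*)\,dx\le C\delta^2$; then restart the whole argument on $[T^*,2T^*]$ with these $O(\delta^2)$-small "initial data" for $V$, noting that the representation \eqref{vtt} for $\ddot V$ is insensitive to the data at the restart time and that $E^\delta$ remains $O(\delta)$ on the new interval by Lemma~\ref{closepp}. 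After finitely many steps ($\lceil T/T^*\rceil$ of them, the number depending only on $m,\gamma,T$) one accumulates a constant $C=C(\|f\|_{L^\infty([0,T])},m,r,k,T)$ and obtains $\int_0^T\!\!\int_0^1(P^2+P_x^2+\dot V^2+V^2)\,dx\,dt\le C\delta^2$, which is \eqref{converges}. Finally, \eqref{alsoconverges} follows by the triangle inequality from \eqref{converges} together with \eqref{ebd2}: $\|p^\delta-p_1\|_{L^2}\le\|p^\delta-\delpo\|_{L^2}+\|\delpo-p_1\|_{L^2}$, both terms being $O(\delta)$.
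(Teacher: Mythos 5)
Your argument is correct in outline, but it takes a genuinely different route from the paper's. The paper uses the same decomposition $\delpsi=\delpo-p_1$, $\delw=\delv-v_1$ and the same ODE energy identity (multiply by $\dot{w}^\delta$, integrate, absorb the nonlinearity using $|\N'|\leq\rho<r$), but it then closes the estimate \emph{globally in time in a single step} by observing that the fluid--solid cross term has a sign: using $\delpsi_{xx}=-\ddot{w}^\delta$ and two integrations (one by parts in $x$, one in $t$) one finds $\int_0^T\int_0^1\delpsi\,\dot{w}^\delta\,dx\,dt=\tfrac12\int_0^1\big(\int_0^T\delpsi_x\,dt\big)^2dx\geq 0$, so this term can simply be kept on the left for \emph{any} $T$; the $P_x$ bound is then recovered afterwards by multiplying $\delpsi_{xx}=-\ddot w^\delta$ by $-\delpsi$. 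You instead treat the coupling term by brute force, re-running the Section 3 machinery (kernel representation of $\ddot V$, the bound $|K|\leq\gamma$, the resulting coefficient $\frac1m-\frac{\gamma}{2}T$), which forces the small-time restriction $T\leq T^*$, a Gronwall step to close the two coupled inequalities, and an iteration over $\lceil T/T^*\rceil$ windows. This works --- your two intermediate inequalities are each derivable, and substituting the $P$-bound into the pointwise-in-$t$ form of the $V$-bound does yield a Gronwall inequality for $\int_0^1\dot V^2(x,t)\,dx$ on $[0,T^*]$ --- but it is considerably heavier, and the iteration requires more care than you give it: the representation of $\ddot V$ on $[T^*,2T^*]$ is \emph{not} insensitive to the restart data (homogeneous-solution terms proportional to $V(\cdot,T^*)$ and $\dot V(\cdot,T^*)$ appear); the argument survives only because those data are already $O(\delta)$ in $L^2$ from the previous window, so their contribution to the squared quantities is $O(\delta^2)$. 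The paper's sign observation (also the point of its Appendix) is precisely what makes all of this unnecessary, and is worth internalizing; on the other hand, your Lipschitz treatment $|\N(\dot v^\delta)-\N(\dot v_1)|\leq\rho|\dot V|$ is equivalent to the paper's mean-value linearization $\N'(h^\delta)\dot w^\delta$, and your derivation of \eqref{alsoconverges} from \eqref{ebd2} and the triangle inequality matches the paper's.
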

\begin{proof}
We begin by introducing the quantities $\delpsi:=\delpo-p_1$ and $\delw:=\delv-v_1$. Writing
\[
\N(\delv)=\N(v_1+\delw)=\N(v_1)+\N'(h^\delta)\delw
\]
for some function $h^\delta=h^\delta(x,t)$ taking values between $v_1$ and $\delv$, it follows from \eqref{intdelp} and \eqref{spring2} that
the pair $(\delpsi,\delw)$ solves the system
\begin{equation}
 m\ddot{w}^\delta+r\dot{w}^\delta + k(x)\delw = -\delpsi + N'(h^\delta)\dot{w}^\delta +E^{\delta}\label{one}
\end{equation}
\begin{equation}
\; \delpsi_{xx} = -\dot{w}^\delta =\frac{1}{m}\bigg( \delpsi -  N'(h^\delta) \dot{w}^\delta  +r\dot{w}^\delta + k(x)\delw -E^{\delta}\bigg).\label{two}
\end{equation}
along with boundary and initial conditions
\begin{equation}
\delpsi(0,t)=\delpsi(1,t)=0\quad\mbox{and}\quad \delw(x,0)=\dot{w}^\delta(x,0)=0.\label{bcicpw}
\end{equation}
 We then multiply \eqref{one} by $\dot{w}^\delta$ and integrate over $[0,1]\times [0,T]$ to see that
\begin{eqnarray}
&&\int_0^1 \frac{1}{2}\left((\dot{w}^\delta)^2(x,T) + k(x) (\delw)^2(x,T)\right)\,dx+ \int_0^T\int_0^1 r (\dot{w}^\delta)^2 \,dx\,dt+
\int_0^T\int_0^1 \delpsi\, \dot{w}^\delta \,dx\,dt\nonumber\\
&&=\int_0^T\int_0^1\left(\N'(h^\delta) (\dot{w}^\delta)^2 + E^\delta \dot{w}^\delta\right)\,dx\,dt.\label{iden}
\end{eqnarray}
Regarding the last term on the left-hand side of
\eqref{iden} we appeal to \eqref{two} and \eqref{bcicpw} to see that it is nonnegative since
\begin{eqnarray}
&&
\int_0^T\int_0^1 \delpsi(x,t)\dot{w}^\delta(x,t)\,dx\,dt= -\int_0^T\int_0^1 \delpsi(x,t)\int_0^t \delpsi_{xx}(x,s)\,ds\,dx\,dt\nonumber \\
&&= -\int_0^T\int_0^t\int_0^1 \delpsi(x,t)\delpsi_{xx}(x,s)\,dx\,ds\,dt= \int_0^T\int_0^t\int_0^1 \delpsi_x(x,t)\delpsi_{x}(x,s)\,dx\,ds\,dt \nonumber \\
&&=\int_0^1\int_0^T H(x,t)\,\frac{\partial}{\partial t} H(x,t)\,dt\,dx=\frac{1}{2} \int_0^1 H(x,T)^2\,dx=\frac{1}{2} \int_0^1 \left(\int_0^T\psi_x(x,t)\,dt\right)^2\,dx,\nonumber\\
&&\label{short}
\end{eqnarray}
where $H(x,t):=\int_0^t \delpsi_x(x,s)\,ds.$

Returning to \eqref{iden} we can then use that $\abs{\N'(h^\delta)}\leq \rho<r$ from assumption \eqref{Nprops} along with the estimate \eqref{smallE} to derive the inequality for every $t\in [0,T]$:
\begin{equation}
 \int_0^1 (\dot{w}^\delta)^2(x,t) + (\delw)^2(x,t)\,dx+ \int_0^T\int_0^1 (\dot{w}^\delta)^2(x,t) \,dx\,dt
\leq C\delta^2\label{three}
\end{equation}
for a constant $C$ independent of $\delta$. Integrating this bound with respect to $t$ over $[0,T]$ we obtain the $L^2\big([0,1]\times [0,T]\big)$ bounds on
$\delv-v_1$ and $\dot{v}^\delta-\dot{v}_1$ claimed in \eqref{converges}. Finally, we multiply \eqref{two} by $-\delpsi$ and integrate
over $[0,1]\times [0,T]$. After an integration by parts in $x$ we obtain the $L^2$ bounds on $\delpo-p_1$ and $(\delpo)_x-{p_1}_x$ claimed in \eqref{converges} through the use of  \eqref{smallE} and \eqref{three}. Inequality \eqref{alsoconverges} then follows from \eqref{ebd2} and \eqref{converges}.

\end{proof}

\section{Discussion}

We have carried out a rigorous analysis to demonstrate that the model \eqref{Laplace}--\eqref{ic} involving two spatial dimensions and time can be well-approximated by the much simpler reduced model \eqref{pbar}-\eqref{vbaric} in one spatial variable and time. From the standpoint of this analysis, the fine detail of the nonlinearity $\mathcal{N}$ is not crucial, so long as the conditions in \eqref{Nprops} are met. However, in testing the effectiveness of the model numerically, we have opted  below for the first choice $\rho \dot{v}e^{-c\abs{\dot{v}}}$ in \eqref{Nrho} as opposed to the second choice $\tanh(\rho\dot{v})$ since the latter saturates at large values of $\dot{v}$, thus pumping energy into the system. That said, the computational results are not dramatically different for these two forms of $\N$.

To demonstrate the effects of the nonlinear active model we simulated it once in the case of no amplification $\N=0$, (Figure 1, left), and once for the one-dimensional reduced active model (Figure 1, right). In our simulations we normalized the cochlea length to be $1$. The time unit is chosen so that the input signal takes the form $0.1\cos(2t)+0.08\cos(2.4t)$, corresponding to a two-tone signal at frequencies $2$KHz and $2.4$KHz. The mass $m$ and the stiffness $k(x)$ were selected to fit the place principle $k(x)=400m\exp(-9.6 x)$. We use the nonlinear term
$\mathcal{N}(\dot{v})=\rho\dot{v} \exp(-0.05|\dot{v}|)$, cf. \eqref{Nrho}. The friction parameter was set somewhat arbitrarily to $r=0.3$. Finally we set $\rho=0.2995$. In both models we observe the best response at the locations dictated by the place  principle, as anticipated. However, the signal in the nonlinear case is amplified by a factor of about $10$  compared to the passive case. (Note the different vertical scales on the left and right.) Furthermore, the signal in the active model is far sharper than in the passive model. An interesting feature of the active case is the presence of oscillations off the resonance peaks. To understand these oscillations we observe that in the active case the {\em effective} friction, at least away from the resonance point, namely for $\dot{v}$ small, is not $r$ but rather $r-\rho$ which is very small. Therefore the BM acts as an underdamped oscillator. It is interesting to note here that after oscillations in the perilymph fluid are transformed as described above into mechanical vibrations of the BM, these vibrations are transformed via a complex set of interactions in the Organ of Corti into oscillations in the endolymph, which then set the inner hair cells into motion, thus activating the auditory nerve. It was recently conjectured \cite{berger20} that this further transformation from solid back to viscous fluid vibrations involves a {\em second} filter. We wonder whether one benefit of this second filter is to suppress the spurious oscillations of the BM.
\begin{figure}
    \begin{center}
    \adjincludegraphics[width=6.5cm, trim={0 4cm 0 4cm}, clip]{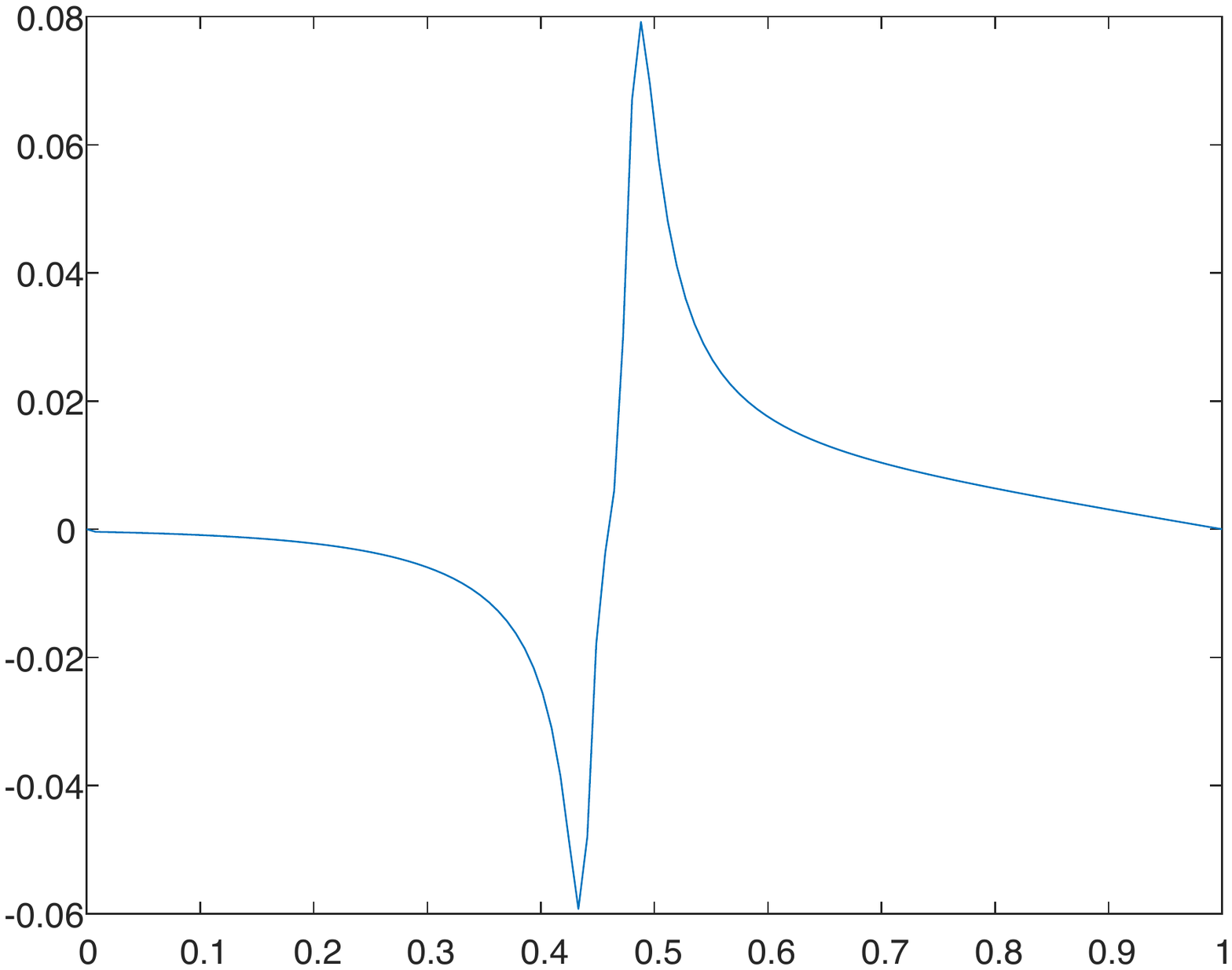}
    \quad
    \adjincludegraphics[width=6.5cm, trim={0 4cm 0 4cm}, clip]{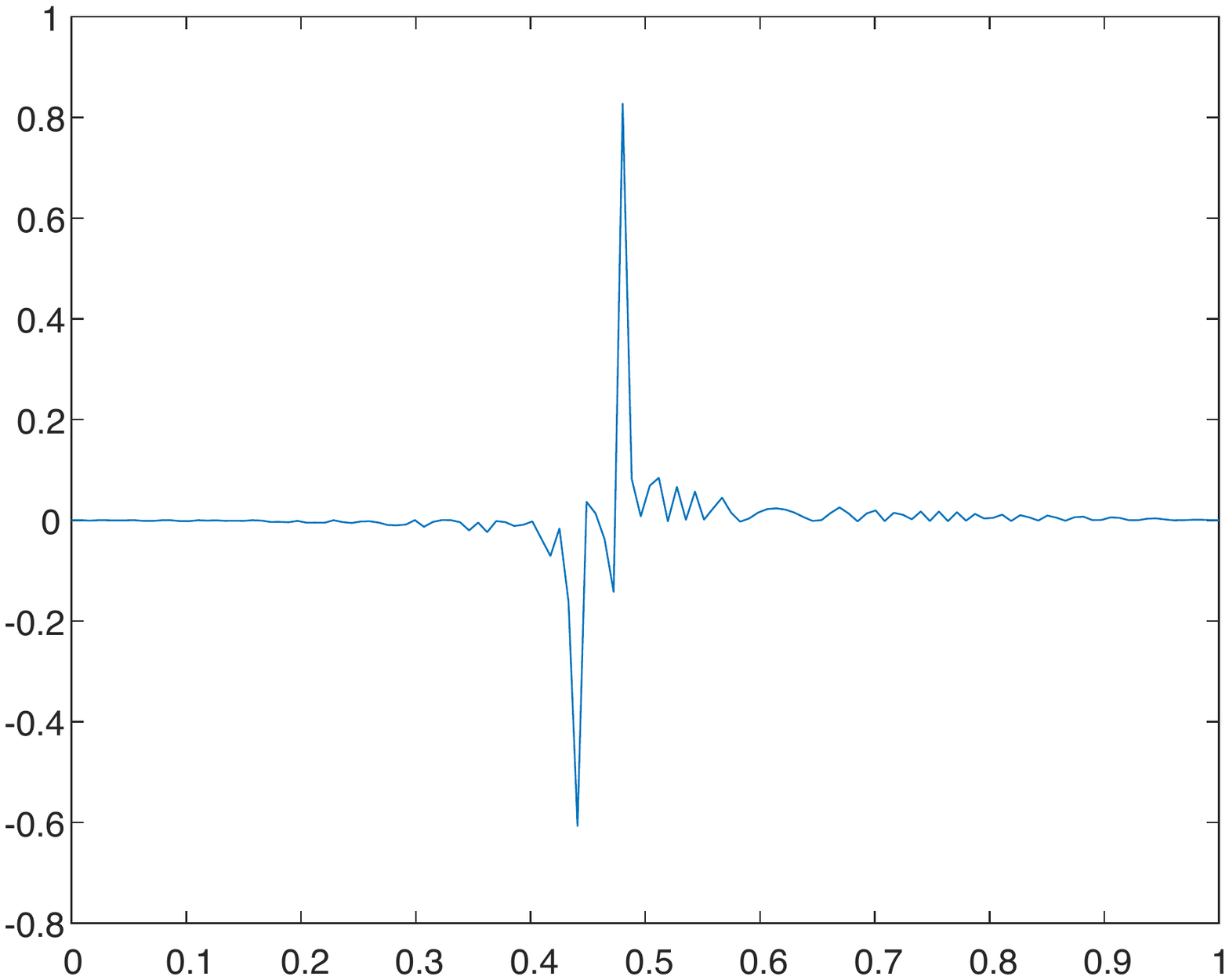}
    \caption{The amplitude $v(x,t)$ for a fixed time $t=T$. The input signal consists of two tones separated by $400$Hz. The friction parameters are $r=0.3, \rho=0.295$. (left) The passive model $\N=0$, (right) The active model.}
    \label{fig:example1}
    \end{center}
\end{figure}

When the friction term $r$ is even larger the passive model cannot distinguish between frequencies separated by $300$Hz. This is demonstrated in Figure 2 (left) where the input is $0.1(\cos(2t)+\cos(2.3t))$. The image depicts the response $v(x,T)$ for the passive model with $r=2,\rho=1.995$. On the other hand, the nonlinear active model easily separates the two tones, as evidenced by Figure 2 (right).

\begin{figure}
    \begin{center}
       \adjincludegraphics[width=6.5cm, trim={0 4cm 0 4cm}, clip]{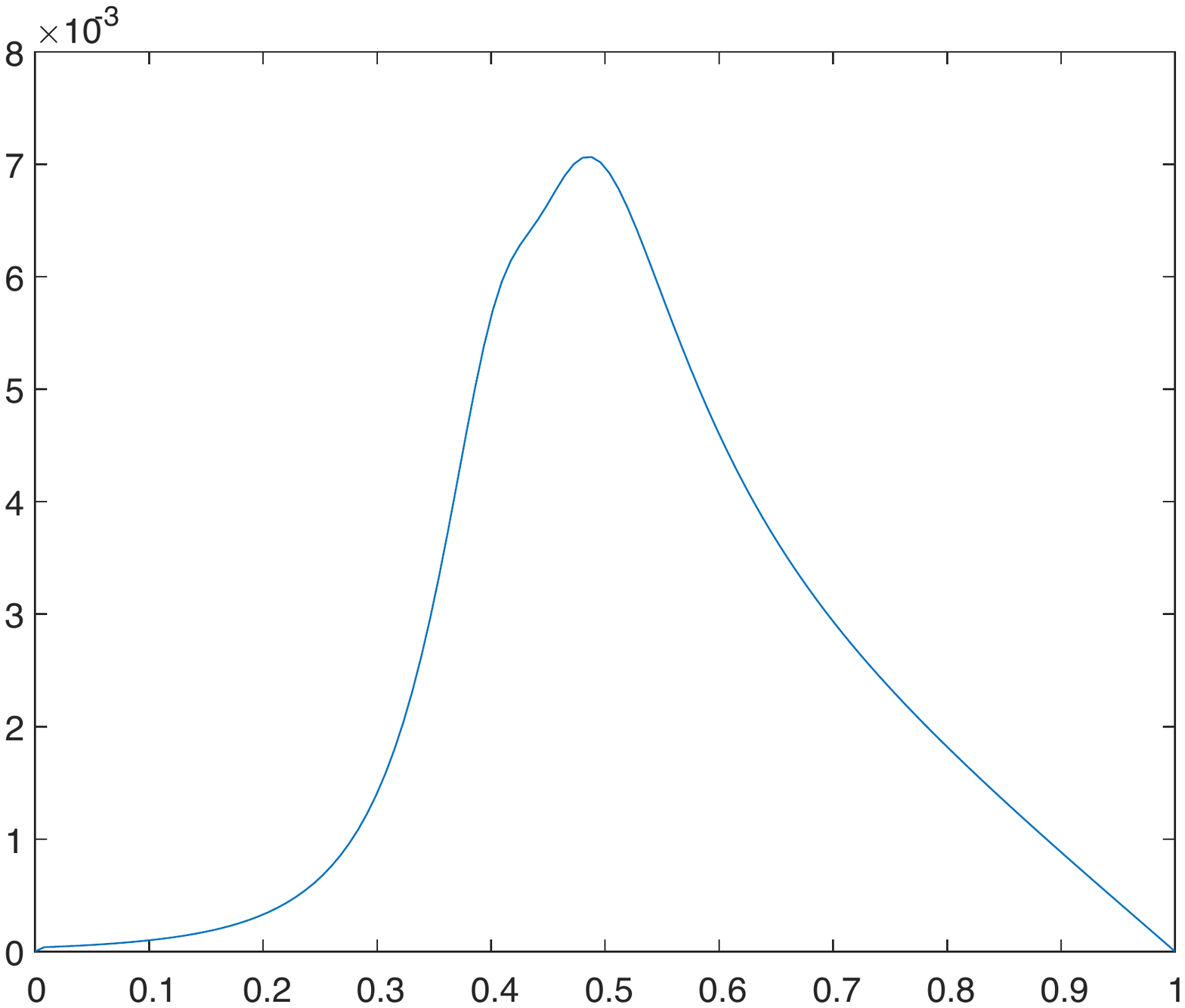}
    \quad
       \adjincludegraphics[width=6.5cm, trim={0 4cm 0 4cm}, clip]{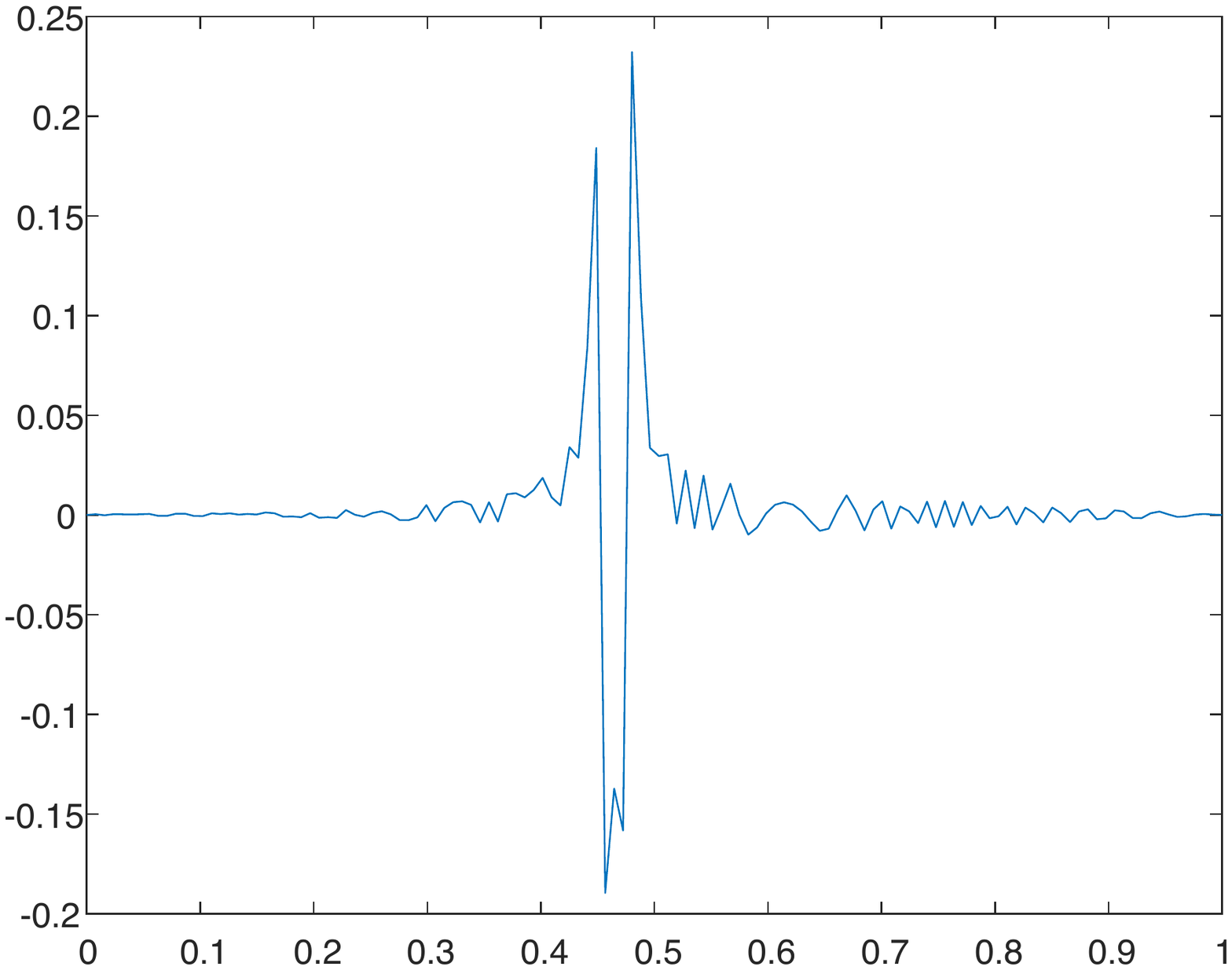}
    \caption{The amplitude $v(x,t)$ for a fixed time $t=T$. The input signal consists of two tones separated by $300$Hz. The friction parameters are $r=2, \rho=1.995$. (left) The passive model $\N=0$, (right) The active model.}
    \label{fig:example2}
    \end{center}
\end{figure}

We point out that the reduced active model is in agreement also with other known features of the cochlea, including otoacoustic emission of sound (this can be observed when $\rho$ is modeled as a random variable with average smaller than $r$ but sufficiently large variance). In addition the active model here provides saturation for high amplitude signals. We comment that our simulations were done at a resolution of $128$ spatial points, which is substantially lower than the resolution provided by the roughly $3000$ inner hair cells in the cochlea.
\newpage
\vskip.2in
\noindent
{\bf Acknowledgments.} J.R. acknowledges the support from a grant supplied by the Israel Science Foundation. P.S. acknowledges the support from a Simons Collaboration grant 585520.

\section{Appendix: The solid-fluid energy estimate}

Recall equations (\ref{Laplace})-(\ref{ic}) and define
\begin{equation}
p(x,z,t) := q(x,z,t)+f(t)(1-x). \label{e1}
\end{equation}
Then, the equation for $q$ is
\begin{equation}
q_{xx}+\dl^{-2} q_{zz}=0, \label{e3}
\end{equation}
with BC
\begin{equation}
q(0,z,t)=q(1,z,t)=q_z(x,1,t)=0,\;\;\; q_z(x,0,t)=\dl^2 \ddot{v}. \label{e5}
\end{equation}

We first derive the energy (and dissipation) balance for the full model. For this purpose we multiply equation (\ref{spring}) by $\dot{v}$ and integrate in $x$ and $t$:
\begin{equation}
\int_0^1 \int_0^T \left( m \ddot{v} \dot{v} + r \dot{v}^2 + kv \dot{v} \right)\; dtdx +
\int_0^1 \int_0^T p(x,0,t)\dot{v} \; dxdt = \int_0^1 \int_0^T \dot{v} N(\dot{v})\; dt dx. \label{e7}
\end{equation}
The terms in the first integral in the left hand side are the BM energy and the dissipation (friction) there, and the last integral is the energy stored in the fluid-BM interaction. Performing one integration we write:
\begin{eqnarray}
\frac{1}{2} \int_0^1 \left(m \dot{v}^2(x,T) + k v^2(x,T) \right) \; dx + \int_0^1 \int_0^T \left(r \dot{v}^2(x,t) + p(x,0,t)\dot{v}\right) \; dxdt =  \nonumber \\
\int_0^1 \int_0^T \dot{v} N(\dot{v})\; dt dx, \label{e9}
\end{eqnarray}
Notice that used here the initial condition (\ref{ic}).

Using the substitution (\ref{e1}) we rewrite the last identity as:
\begin{eqnarray}
\frac{1}{2} \int_0^1 \left(m \dot{v}^2(x,T) + k v^2(x,T) \right) \; dx + \int_0^1 \int_0^T \left(r \dot{v}^2(x,t) + q(x,0,t)\dot{v}\right) \; dxdt =  \nonumber \\
-\int_0^1 \int_0^T f(t)(1-x) \dot{v}(x,t)\;dt dx + \int_0^1 \int_0^T \dot{v} N(\dot{v})\; dt dx. \label{ea1}
\end{eqnarray}

We proceed to estimate the term $\int_0^1\int_0^T q(x,0,t)\dot{v}(x,t) \; dtdx$. Practically, we compute explicitly the NdT operator for the problem (\ref{e3})-(\ref{e5}). In particular we show that this term is positive. We thus expand $q$ of equation (\ref{e1}) into
\begin{equation}
q(x,z,t)=\sum_n a_n(t) \sin (n \pi x) \cosh  (n \pi \dl(1-z)). \label{e13}
\end{equation}
Similarly we expand
\begin{equation}
\ddot{v}(x,t) = \sum_n b_n(t) \sin (n \pi x). \label{e15}
\end{equation}
Equating $q_z(x,0,t) = -\dl^2 \ddot{v}(x,t)$, we obtain
\begin{equation}
a_n(t) = \frac{\dl b_n(t)}{n \sinh(n \pi \dl)}. \label{e17}
\end{equation}
We therefore obtain
\begin{equation}
q(x,0,t) = \sum_n \frac{\dl}{n} \coth (n \pi \dl) b_n(t) \sin(n \pi x). \label{e19}
\end{equation}
Expanding also
\begin{equation}
\dot{v}(x,t) = \sum_n \gm_n(t) \sin(n \pi x), \label{e21}
\end{equation}
and recalling $b_n(t)=\gm_n'(t)$, we obtain
\begin{equation}
\int_0^1 \int_0^T q(x,0,t) \dot{v}(x,t)dx = \frac{\pi^2}{2} \sum_n\frac{\dl \coth (n \pi \dl)}{n} \gm_n^2(T), \label{e23}
\end{equation}
where we used again the initial condition (\ref{ic})). We observe that the sequence $\frac{\dl \coth (n \pi \dl)}{n}$ is positive and bounded. This completes this formal derivation of our energy identity.

Once we have established a positive sign for the solid-fluid interaction energy one can use equation (\ref{ea1}) to obtain an
alternative derivation of Corollary \ref{vbds}, and with further steps that we do not spell out
obtain an alternative convergence proof for the validity of the reduced one-dimensional limit.

\end{document}